\theoremstyle{plain}
\newtheorem{definition}{Definition}
\newtheorem{thm}[definition]{Theorem}
\newtheorem{lem}[definition]{Lemma}
\def\F#1#2#3#4{F\biggl(\genfrac..{0pt}{}{{#1},\,{#2}}{#3}\,;#4\biggr)}
\def\f#1#2#3#4{f\biggl(\genfrac..{0pt}{}{{#1},\,{#2}}{#3}\,;#4\biggr)}
\def\y#1#2#3#4#5{f_{#1}\biggl(\genfrac..{0pt}{}{{#2},\,{#3}}{#4}\,;#5\biggr)}
\def\c#1#2#3#4#5#6#7{\biggl(\genfrac..{0pt}{}{{#1},\,{#2}}{#3}\,;\genfrac..{0pt}{}{{#4},\,{#5}}{#6}\,;#7\biggr)}
\def\Q#1#2#3#4#5#6#7{Q\biggl(\genfrac..{0pt}{}{{#1},\,{#2}}{#3}\,;\genfrac..{0pt}{}{{#4},\,{#5}}{#6}\,;#7\biggr)}
\def\R#1#2#3#4#5#6#7{R\biggl(\genfrac..{0pt}{}{{#1},\,{#2}}{#3}\,;\genfrac..{0pt}{}{{#4},\,{#5}}{#6}\,;#7\biggr)}
\def\q#1#2#3#4#5#6#7{q\biggl(\genfrac..{0pt}{}{{#1},\,{#2}}{#3}\,;\genfrac..{0pt}{}{{#4},\,{#5}}{#6}\,;#7\biggr)}
\def\G{\Gamma}
\def\vec#1{\mbox{\boldmath $#1$}}
\DeclareMathOperator{\map}{Map}
\begin{document}
\title{Symmetries of Coefficients of Three-Term Relations for the Hypergeometric Series}
\author{Yuka Yamaguchi}
\date{\today}

\maketitle

\begin{abstract}
Any three hypergeometric series whose respective parameters, $a,\ b$ and $c$, 
differ by integers satisfy a linear relation 
with coefficients that are rational functions of $a,\ b,\ c$ and the variable $x$. 
These relations are called three-term relations. 
This paper shows that the coefficients of three-term relations have 
properties called symmetries, 
and gives explicit formulas describing the symmetries. 
\end{abstract}

Key Words and Phrases : The hypergeometric series; Three-term relation; Contiguous relation; Symmetry. 

2010 Mathematics Subject Classification Numbers : 33C05.

\section{\bf{Introduction}}
The hypergeometric series is defined by 
\begin{align*}
F (a, b, c\,; x) = \F{a}{b}{c}{x} := \sum_{n = 0}^{\infty} \frac{(a)_{n} (b)_{n}}{(c)_{n} (1)_{n}} x^{n}. 
\end{align*}
Here, $(\alpha)_{n}$ denotes $\G(\alpha + n) / \G(\alpha)$, 
which equals $\alpha (\alpha + 1) \dotsm (\alpha + n - 1)$ for any positive integer $n$. 
It is assumed that $c$ is such that the denominator factor $(c)_{n}$ is never zero. 

As mentioned in \cite[Section~$2.5$, p.$94$]{AAR}, it is known that for any triples of integers $(k, l, m)$ and $(k', l', m')$, 
the three hypergeometric series 
\begin{align*}
\F{a + k}{b + l}{c + m}{x}, \quad \F{a + k'}{b + l'}{c + m'}{x}, \quad \F{a}{b}{c}{x}
\end{align*}
satisfy a linear relation with coefficients that are rational functions of $a,\ b,\ c$ and $x$. 
We call such a relation a three-term relation.  
Gauss obtained the three-term relations in the cases 
\begin{align*}
(k, l, m), (k', l', m') \in \left\{(1, 0, 0), (-1, 0, 0), (0, 1, 0), (0, -1, 0), (0, 0, 1), (0, 0, -1) \right\}, 
\end{align*}
where $(k, l, m) \neq (k', l', m')$; 
thus, there are $\binom{6}{2} = 15$ pairs of $(k, l, m)$ and $(k', l', m')$. 
See \cite[Chapter~$4$, p.$71$]{Rain} for the 15 three-term relations obtained by Gauss. 

We consider the three-term relations of the following form: 
\begin{align}\label{3kou}
\F{a + k}{b + l}{c + m}{x} 
= Q \cdot \F{a + 1}{b + 1}{c + 1}{x} + R \cdot \F{a}{b}{c}{x}. 
\end{align}
Note that the pair $(Q, R)$ of rational functions of $a,\ b,\ c$ and $x$ 
is uniquely determined by $(k, l, m)$ (cf. \cite[Chapter~$6$, Section~$23$]{Poole}). 
Ebisu \cite[Section~$2. 3$]{Eb2} noticed that the coefficient $Q$ in $(\ref{3kou})$ has 48 symmetries, and using these symmetries, he gave many special values of the hypergeometric series. 
Unfortunately, Ebisu presented only two explicit formulas \cite[(2.7), (2.11)]{Eb2} 
as examples of $Q$'s symmetries. 
On the other hand, Vid\=unas considered the three-term relations 
of the form
\begin{align*}
\F{a + k}{b + l}{c + m}{x} 
= \tilde{Q} \cdot \F{a + 1}{b}{c}{x} + \tilde{R} \cdot \F{a}{b}{c}{x}, 
\end{align*}
and gave an explicit formula describing $\tilde{Q}$'s symmetry \cite[p. 509, $(11)$]{Vidunas}. 
We will see that $Q$ also has the same symmetry. 

In this paper, combining the 48 symmetries noticed by Ebisu and 
another symmetry the counterpart of one obtained by Vid\=unas, 
we show that $Q$ has 96 symmetries. 
In addition, we give a relation between $Q$ and $R$, and using the relation, 
we derive 96 symmetries of $R$ from the 96 symmetries of $Q$. 

To avoid ambiguity, we first define the notion of a symmetry of $Q$ and $R$. 
For the parameters $a$, $b$, $c$ and the variable $x$, 
let $S_{a b c}$, $S_{x}$ and $S$ be the sets defined by 
\begin{align*}
S_{a b c} &:= \left\{n_{0} + n_{1} a + n_{2} b + n_{3} c \mid n_{i} \in \mathbb{Z} \right\}, \\
S_{x} &:= \left\{x, \frac{x}{x - 1}, 1 - x, \frac{x - 1}{x}, \frac{1}{x}, \frac{1}{1 - x} \right\}, \\
S &:= \left\{\left(\genfrac..{0pt}{}{k,\, l}{m}\,;\genfrac..{0pt}{}{\alpha_{1},\, \alpha_{2}}{\alpha_{3}}\,; \beta\right) 
\; \middle\vert \; 
k, l, m \in \mathbb{Z},\; \alpha_{i} \in S_{a b c},\; \beta \in S_{x} \right\}, 
\end{align*}
and let $T$ be the set of all rational functions of $a,\ b,\ c$ and $x$. 
Also, let $\map(S, T)$ denote the set of all functions $P : S \to T$. 
Then, $Q$ and $R$ can be regarded as elements of $\map(S, T)$; namely, 
\begin{align*}
Q = \Q{k}{l}{m}{a}{b}{c}{x}, \quad 
R = \R{k}{l}{m}{a}{b}{c}{x} \: \in \map(S, T). 
\end{align*}
We define the notion of a symmetry of elements of $\map(S, T)$ as follows: 
\begin{definition}
Let $G$ be a group that acts on $\map(S, T)$, and 
take any $\varphi \in G$ and $P \in \map(S, T)$. 
If for any $k, l, m \in \mathbb{Z}$, 
there exist $\alpha_{1}, \ldots, \alpha_{n} \in S_{a b c}$ and 
$i_{1}, \ldots, i_{n}, j_{1}, j_{2}, j_{3} \in \mathbb{Z}$ satisfying 
\begin{align*}
P \c{k}{l}{m}{a}{b}{c}{x} 
= (\alpha_{1})_{i_{1}} (\alpha_{2})_{i_{2}} \dotsm (\alpha_{n})_{i_{n}} (-1)^{j_{1}} x^{j_{2}} (1 - x)^{j_{3}} 
\left(\varphi P \right) \c{k}{l}{m}{a}{b}{c}{x}, 
\end{align*}
then we say that $P$ has a symmetry under $\varphi$. 
If $P$ has a symmetry under an arbitrary $\varphi \in G$, 
then we say that $P$ has symmetries under the action of $G$. 
\end{definition}

We introduce a transformation group $G$ and define an action of $G$ on $\map(S, T)$. 
Let $G$ be the group generated by the following four mappings 
so that $G$ acts on $S$: 
\begin{align*}
\sigma_{0} : \: &\c{k}{l}{m}{a}{b}{c}{x} \mapsto \c{-k}{-l}{-m}{a + k}{b + l}{c + m}{x}, \allowdisplaybreaks \\
\sigma_{1} : \: &\c{k}{l}{m}{a}{b}{c}{x} \mapsto \c{m - k}{l}{m}{c - a}{b}{c}{\frac{x}{x - 1}}, \allowdisplaybreaks \\
\sigma_{2} : \: &\c{k}{l}{m}{a}{b}{c}{x} \mapsto \c{k}{l}{k + l - m}{a}{b}{a + b + 1 - c}{1 - x}, \allowdisplaybreaks \\
\sigma_{3} : \: &\c{k}{l}{m}{a}{b}{c}{x} \mapsto \c{l}{k}{m}{b}{a}{c}{x}, 
\end{align*}
where group operation is defined as the composition of elements in $G$. 
We will see that $G$ is isomorphic to 
$\mathbb{Z} / 2 \mathbb{Z} \times \left(S_{3} \ltimes \left(\mathbb{Z} / 2 \mathbb{Z} \right)^{3} \right)$; 
thus, the order of $G$ equals 96 (see Lemma~$\ref{lem:G}$).  
We define an action of $G$ on $\map(S, T)$ by
$\left(\sigma P \right) (\vec{z}) := P \left(\sigma^{-1} \vec{z} \right)$, 
where $\sigma \in G$, $P \in \map(S, T)$ and $\vec{z} \in S$. 

The following theorem provides $Q$'s symmetries. 
\begin{thm}\label{Q-sym}
The coefficient $Q$ of $(\ref{3kou})$ has symmetries under the action of $G$; 
thus, $Q$ has 96 symmetries. 
In fact, $Q$ has the following symmetries: 
\begin{align}
\Q{k}{l}{m}{a}{b}{c}{x} 
&= \frac{(c + 1)_{m} (c)_{m} (-1)^{m - k - l - 1} x^{-m} (1 - x)^{m - k - l}}{(a + 1)_{k} (b + 1)_{l} (c - a)_{m - k} (c - b)_{m - l}} 
(\sigma_{0} Q) \c{k}{l}{m}{a}{b}{c}{x}, \label{(sig0)Q} \allowdisplaybreaks \\
\Q{k}{l}{m}{a}{b}{c}{x} 
&= - \frac{a}{c - a} (1 - x)^{2 - l} 
(\sigma_{1} Q) \c{k}{l}{m}{a}{b}{c}{x}, \label{(sig1)Q} \allowdisplaybreaks \\
\Q{k}{l}{m}{a}{b}{c}{x} 
&= \frac{(c + 1)_{m - 1} (c - a - b - 1)_{m + 1 - k - l}}{(c - a)_{m - k} (c - b)_{m - l}} 
(\sigma_{2} Q) \c{k}{l}{m}{a}{b}{c}{x}, \label{(sig2)Q} \allowdisplaybreaks \\
\Q{k}{l}{m}{a}{b}{c}{x} 
&= (\sigma_{3} Q) \c{k}{l}{m}{a}{b}{c}{x}. \label{(sig3)Q}
\end{align}
In addition, combining these formulas, 
we are able to obtain the other 92 explicit formulas describing $Q$'s symmetries. 
\end{thm}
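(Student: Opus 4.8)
The plan is to prove the four generating identities \eqref{(sig0)Q}--\eqref{(sig3)Q} one at a time by pushing the three-term relation \eqref{3kou} through the classical transformation of the hypergeometric function attached to $\sigma_i$ and then invoking the uniqueness of the pair $(Q,R)$: whenever I can rewrite \eqref{3kou} as a bona fide three-term relation for some data $(\tilde a,\tilde b,\tilde c)$, shifts $(\tilde k,\tilde l,\tilde m)$ and variable $\tilde x$, I may read off the corresponding value of $Q$ by comparison. Two tools recur. First, the derivative formula $\frac{d}{dx}\F{a}{b}{c}{x}=\frac{ab}{c}\F{a+1}{b+1}{c+1}{x}$ lets me recast \eqref{3kou} as the operator identity $\mathcal L_{k,l,m}=R+\tfrac{Qc}{ab}\tfrac{d}{dx}$, a contiguity operator that carries the entire solution space of the hypergeometric equation $L_{a,b,c}$ into that of $L_{a+k,b+l,c+m}$; applying it to a non-principal solution is what will generate the symmetries. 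Second, $\sigma_3$ is immediate: applying $\F{a}{b}{c}{x}=\F{b}{a}{c}{x}$ to all three functions in \eqref{3kou} and relabelling $a\leftrightarrow b,\ k\leftrightarrow l$ reproduces the relation, so uniqueness yields \eqref{(sig3)Q} with trivial prefactor.

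For $\sigma_1$ I apply the Pfaff transformation $\F{a}{b}{c}{x}=(1-x)^{-b}\F{c-a}{b}{c}{\frac{x}{x-1}}$ to each function in \eqref{3kou}; the three gauge factors combine into a single power of $1-x$, and $a\mapsto c-a$ produces the shift $k\mapsto m-k$. The only wrinkle is that the transformed middle function carries first parameter $c-a$ rather than the $c-a+1$ required by the standard form; one Gauss contiguous relation rewrites it against the two basis functions, and this realignment manufactures both the factor $-\frac{a}{c-a}$ and the power $(1-x)^{2-l}$ of \eqref{(sig1)Q}. For $\sigma_0$ I reindex \eqref{3kou}, replacing $(a,b,c)\to(a+k,b+l,c+m)$ and $(k,l,m)\to(-k,-l,-m)$; since $\sigma_0$ is an involution this expresses $\F{a}{b}{c}{x}$ through $\F{a+k+1}{b+l+1}{c+m+1}{x}$ and $\F{a+k}{b+l}{c+m}{x}$ with coefficient $(\sigma_0 Q)$ at the original data. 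Writing this reindexed relation and the original one both in terms of a single solution $y=\F{a}{b}{c}{x}$ and $y'$ and eliminating $y''$ via $L_{a,b,c}$, I get a linear system whose consistency forces \eqref{(sig0)Q}; the Wronskian of $L_{a,b,c}$, proportional to $x^{-c}(1-x)^{c-a-b-1}$, is exactly what produces the powers $x^{-m}(1-x)^{m-k-l}$ and the sign, while the normalisation constants contribute the Pochhammer quotient.

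The main obstacle is $\sigma_2$, the reflection $x\mapsto 1-x$, because this is not a one-term gauge transformation: $\F{a}{b}{c}{x}$ is not a scalar multiple of a single hypergeometric function of $1-x$, since $x\mapsto 1-x$ interchanges the singular points $0$ and $1$. I plan to use the operator identity on the solution $u:=\F{a}{b}{a+b+1-c}{1-x}$, the Frobenius solution of $L_{a,b,c}$ at $x=1$, whose parameters realise precisely $\sigma_2$'s changes $c\mapsto a+b+1-c$, $m\mapsto k+l-m$, $x\mapsto 1-x$. Because the contiguity operator preserves the local exponent $0$ at $x=1$, $\mathcal L_{k,l,m}[u]$ is a scalar multiple $\mu\cdot u(a+k,b+l,c+m)$ of the corresponding shifted solution, and matching this against the $\sigma_2$-transformed three-term relation reduces the whole identity to the single constant $\mu$. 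Determining $\mu$ is the crux: it is a ratio of Gamma values, which is why \eqref{(sig2)Q} carries the Pochhammer quotient $\tfrac{(c+1)_{m-1}(c-a-b-1)_{m+1-k-l}}{(c-a)_{m-k}(c-b)_{m-l}}$ and no pure power of $x$. This is where the standing hypotheses $a,b,c-a,c-b,c,c-a-b,a-b\notin\mathbb Z$ are needed, to keep the two local solutions at $x=1$ independent and to make Ebisu's closed form for $Q$ in \cite{Eb1} available; I expect to pin down $\mu$ either through the Gauss connection coefficients or, more robustly, by substituting that closed form into \eqref{(sig2)Q} and collapsing it with the $x\mapsto 1-x$ connection and contiguous relations.

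Finally, since $G$ is generated by $\sigma_0,\sigma_1,\sigma_2,\sigma_3$ (Proposition~\ref{prop:G}), symmetry under each generator propagates to all ninety-six elements by a cocycle argument. If $P=c_\varphi\cdot(\varphi P)$ and $P=c_\psi\cdot(\psi P)$ in the sense of the Definition, then, because $(\varphi P)(\vec z)=P(\varphi^{-1}\vec z)$, one gets $P=c_\varphi\cdot\bigl(c_\psi\circ\varphi^{-1}\bigr)\cdot\bigl((\varphi\psi)P\bigr)$, so the prefactors compose multiplicatively. It then remains only to check that the admissible class of prefactors---finite products of Pochhammer symbols $(\alpha)_{n}$ with $\alpha\in S_{abc}$ times $(-1)^{n}x^{n}(1-x)^{n}$---is closed under the $G$-action on $S$ (for instance $\sigma_1$ sends $1-x\mapsto 1/(1-x)$ and $\sigma_2$ sends $1-x\mapsto x$, while every $\alpha\in S_{abc}$ is carried back into $S_{abc}$). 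Closure together with multiplicativity upgrades the four generating formulas to the remaining ninety-two, completing the count of ninety-six symmetries.
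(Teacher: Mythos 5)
Your overall architecture is the same as the paper's: prove the four generator identities, then propagate them through $G$ by the multiplicative cocycle rule $\lambda_{\sigma\sigma'}(\vec{z})=\lambda_{\sigma}(\vec{z})\,\lambda_{\sigma'}(\sigma^{-1}\vec{z})$, which is literally the paper's Lemma~\ref{lam=lam.lam}, together with the (correct, and needed) observation that the admissible class of prefactors is stable under the $G$-action. Your proofs of $(\ref{(sig1)Q})$ (Pfaff applied to all three terms of $(\ref{3kou})$, one contiguous relation to realign the middle function, then uniqueness of $(Q,R)$) and of $(\ref{(sig3)Q})$ coincide with the paper's arguments and are complete.

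The gap is in $(\ref{(sig0)Q})$ and $(\ref{(sig2)Q})$, and it sits exactly where the content of those formulas lives: the $x$-independent Pochhammer prefactors. In both cases your scheme reduces the identity to an undetermined scalar --- the determinant $D$ of your eliminated linear system for $\sigma_{0}$, the constant $\mu$ for $\sigma_{2}$ --- and then asserts its value rather than computing it. For $\sigma_{0}$ the mechanism you describe cannot close the argument as stated: Abel's formula fixes only the $x$-dependence $x^{-m}(1-x)^{m-k-l}$ of the Wronskian ratio $D=W(\mathcal{L}w_{1},\mathcal{L}w_{2})/W(w_{1},w_{2})$ for a solution basis $(w_{1},w_{2})$; its constant part requires knowing the scalar by which the contiguity operator $\mathcal{L}=R+\tfrac{cQ}{ab}\partial$ multiplies the second Frobenius solution $x^{1-c}F(a+1-c,b+1-c,2-c\,;x)$ at $x=0$, and ``normalisation constants'' of the two hypergeometric equations do not supply that scalar. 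This is precisely the input the paper imports from Ebisu \cite{Eb1}: the closed form $(\ref{q1})$ for $Q$ and the explicit evaluation of $W$ (Lemma~3.6 there). For $\sigma_{2}$ you explicitly defer the crux ($\mu$); of your two proposed remedies, substituting Ebisu's closed form \emph{is} the paper's actual proof (it transports $(\ref{q1})$ by $\sigma_{2}$ and identifies the result with the second expression $(\ref{q2})$), which you do not carry out, while the connection-coefficient route can indeed be made to work --- since $\mathcal{L}$ sends the exponent-$(1-c)$ solution at $x=0$ into a multiple of the exponent-$(1-c-m)$ one, matching only the coefficient of $F(a+k,b+l,c+m\,;x)$ when your $u$ and $F(a+k,b+l,a+b+k+l+1-c-m\,;1-x)$ are expanded over the basis at $x=0$ gives $\mu$ as an explicit ratio of Gauss connection coefficients, and this in turn yields the missing $\sigma_{0}$ scalar --- but none of that computation appears in your proposal. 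You also use repeatedly, without justification, that $\mathcal{L}$ maps the full solution space of $L_{abc}$ into that of $L_{a+k,b+l,c+m}$; this needs irreducibility of the hypergeometric operator, which is available under the standing non-integrality assumptions but should be invoked.
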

The identity $(\ref{(sig0)Q})$ is counterpart of the symmetry of $\tilde{Q}$ given 
in \cite[p. 509, $(11)$]{Vidunas}. 
The explicit formulas in \cite[(2.7), (2.11)]{Eb2} describe 
the symmetries of $Q$ under $\sigma_{1} \sigma_{3} \sigma_{1} \sigma_{3}$ and 
$\sigma_{3} \sigma_{1} \sigma_{3}$, respectively. 

The following lemma is used to derive $R$'s symmetries from the $Q$'s symmetries. 
\begin{lem}\label{R=Q'}
The coefficients of $(\ref{3kou})$ satisfy the following relation: 
\begin{align*}
\R{k}{l}{m}{a}{b}{c}{x} 
= \frac{c (c + 1)}{(a + 1) (b + 1) x (1 - x)} \Q{k - 1}{l - 1}{m - 1}{a + 1}{b + 1}{c + 1}{x}. 
\end{align*}
\end{lem}

We introduce a transformation group $\tilde{G}$ and define an action of $\tilde{G}$ on $\map(S, T)$. 
Let $\tilde{G}$ be the group generated by the following four mappings so that $\tilde{G}$ acts on $S$: 
\begin{align*}
\tilde{\sigma}_{0} := \tau \sigma_{0} \tau^{-1} : 
\: &\c{k}{l}{m}{a}{b}{c}{x} \mapsto \c{2 - k}{2 - l}{2 - m}{a + k - 1}{b + l - 1}{c + m - 1}{x}, \allowdisplaybreaks \\
\tilde{\sigma}_{1} := \tau \sigma_{1} \tau^{-1} : 
\: &\c{k}{l}{m}{a}{b}{c}{x} \mapsto \c{m + 1 - k}{l}{m}{c - a - 1}{b}{c}{\frac{x}{x - 1}}, \allowdisplaybreaks \\
\tilde{\sigma}_{2} := \tau \sigma_{2} \tau^{-1} : 
\: &\c{k}{l}{m}{a}{b}{c}{x} \mapsto \c{k}{l}{k + l - m}{a}{b}{a + b + 1 - c}{1 - x}, \allowdisplaybreaks \\
\tilde{\sigma}_{3} := \tau \sigma_{3} \tau^{-1} : 
\: &\c{k}{l}{m}{a}{b}{c}{x} \mapsto \c{l}{k}{m}{b}{a}{c}{x}, 
\end{align*}
where $\sigma_{i}$ ($i = 0, 1, 2, 3$) are the mappings defined in the above, $\tau$ is the mapping defined by 
\begin{align*}
\tau : \c{k}{l}{m}{a}{b}{c}{x} \mapsto \c{k + 1}{l + 1}{m + 1}{a - 1}{b - 1}{c - 1}{x}, 
\end{align*}
and group operation is defined as the composition of elements in $\tilde{G}$. 
It immediately follows from this definition that $\tilde{G}$ is isomorphic to $G$; 
thus, the order of $\tilde{G}$ also equals 96. 
We define an action of $\tilde{G}$ on $\map(S, T)$ 
in the same way as the action of $G$ on $\map(S, T)$. 

The following theorem provides $R$'s symmetries. 
\begin{thm}\label{R-sym}
The coefficient $R$ of $(\ref{3kou})$ has symmetries under the action of $\tilde{G}$; 
thus, $R$ has 96 symmetries. 
In fact, $R$ has the following symmetries: 
\begin{align}
\R{k}{l}{m}{a}{b}{c}{x} 
&= \frac{(c + 1)_{m - 1} (c)_{m - 1}}{(a + 1)_{k - 1} (b + 1)_{l - 1} (c - a)_{m - k} (c - b)_{m - l}} \nonumber \\
&\quad \times (-1)^{m - k - l} x^{1 - m} (1 - x)^{m + 1 - k - l} 
(\tilde{\sigma}_{0} R) \c{k}{l}{m}{a}{b}{c}{x}, \label{(tsig0)R} \allowdisplaybreaks \\
\R{k}{l}{m}{a}{b}{c}{x} 
&= (1 - x)^{- l} (\tilde{\sigma}_{1} R) \c{k}{l}{m}{a}{b}{c}{x}, \label{(tsig1)R} \allowdisplaybreaks \\
\R{k}{l}{m}{a}{b}{c}{x} 
&= \frac{(c)_{m} (c - a - b)_{m - k - l}}{(c - a)_{m - k} (c - b)_{m - l}} 
(\tilde{\sigma}_{2} R) \c{k}{l}{m}{a}{b}{c}{x}, \label{(tsig2)R} \allowdisplaybreaks \\
\R{k}{l}{m}{a}{b}{c}{x} 
&= (\tilde{\sigma}_{3} R) \c{k}{l}{m}{a}{b}{c}{x}. \label{(tsig3)R}
\end{align}
In addition, combining these formulas, 
we are able to obtain the other 92 explicit formulas describing $R$'s symmetries. 
\end{thm}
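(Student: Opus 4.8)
The plan is to deduce Theorem~\ref{R-sym} from Theorem~\ref{Q-sym}, exploiting the fact that $\tilde{G}=\tau G\tau^{-1}$, rather than rerunning an independent reduction argument for $R$. The first and decisive step is to record a bridge identity relating $R$ to $Q$ through $\tau$, namely
\begin{align*}
\R{k}{l}{m}{a}{b}{c}{x}
=\frac{c(c+1)}{(a+1)(b+1)\,x(1-x)}\,\Q{k-1}{l-1}{m-1}{a+1}{b+1}{c+1}{x}.
\end{align*}
Writing $\vec{z}=\c{k}{l}{m}{a}{b}{c}{x}$ and $g(\vec{z})=c(c+1)/\big((a+1)(b+1)x(1-x)\big)$, this says $R=g\cdot(\tau Q)$, i.e. $R(\vec{z})=g(\vec{z})\,Q(\tau^{-1}\vec{z})$.

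To prove the bridge identity I would use uniqueness of the pair $(Q,R)$ in \eqref{3kou}. Evaluating \eqref{3kou} at the base $(a-1,b-1,c-1)$ with indices $(k+1,l+1,m+1)$ expresses $\F{a+k}{b+l}{c+m}{x}$ as a combination of $\F{a}{b}{c}{x}$ and $\F{a-1}{b-1}{c-1}{x}$. I would then eliminate $\F{a-1}{b-1}{c-1}{x}$ using the instance of \eqref{3kou} with $(k,l,m)=(-1,-1,-1)$, whose coefficient of $\F{a+1}{b+1}{c+1}{x}$ is $\Q{-1}{-1}{-1}{a}{b}{c}{x}=ab\,x(1-x)/\big(c(c-1)\big)$; this value follows from the hypergeometric differential equation together with $F'(a,b,c;x)=\tfrac{ab}{c}F(a+1,b+1,c+1;x)$. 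Matching the resulting expansion of $\F{a+k}{b+l}{c+m}{x}$ in $\F{a+1}{b+1}{c+1}{x}$ and $\F{a}{b}{c}{x}$ against \eqref{3kou} and dividing by the $\tau$-shift of the above value of $\Q{-1}{-1}{-1}{a}{b}{c}{x}$ yields the stated $g$.

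Next I would transport each symmetry. Fix a generator index $i$ and let $\mu_i(\vec{z})$ denote the scalar factor appearing in the $i$-th identity \eqref{(sig0)Q}--\eqref{(sig3)Q} of Theorem~\ref{Q-sym}, so that $Q(\vec{w})=\mu_i(\vec{w})\,Q(\sigma_i^{-1}\vec{w})$. Using $R=g\cdot(\tau Q)$ and the relation $\sigma_i^{-1}\tau^{-1}=\tau^{-1}\tilde{\sigma}_i^{-1}$ (which is just $\tilde{\sigma}_i=\tau\sigma_i\tau^{-1}$), I compute
\begin{align*}
R(\vec{z})
=g(\vec{z})\,Q(\tau^{-1}\vec{z})
=g(\vec{z})\,\mu_i(\tau^{-1}\vec{z})\,Q(\sigma_i^{-1}\tau^{-1}\vec{z})
=\frac{g(\vec{z})\,\mu_i(\tau^{-1}\vec{z})}{g(\tilde{\sigma}_i^{-1}\vec{z})}\,(\tilde{\sigma}_iR)(\vec{z}),
\end{align*}
where the last equality rewrites $\sigma_i^{-1}\tau^{-1}=\tau^{-1}\tilde{\sigma}_i^{-1}$ and then applies the bridge identity at the point $\tilde{\sigma}_i^{-1}\vec{z}$. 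The bracketed factor is precisely the one claimed in \eqref{(tsig0)R}--\eqref{(tsig3)R}: verifying this is a matter of substituting $\tau^{-1}$ and $\tilde{\sigma}_i^{-1}$ into $g$ and $\mu_i$ and simplifying the Pochhammer symbols. The case $i=3$ is immediate, since $g$ is symmetric in $a,b$ and $\mu_3=1$, giving \eqref{(tsig3)R}; the cases $i=0,1,2$ are routine computations of the same kind (for $i=2$ one checks that $c(c+1)(c+2)_{m-2}=(c)_m$ and that $(c-a-b-2)_{m-k-l+2}$ cancels the factors $(a+b+1-c)(a+b+2-c)$ from $g/g$, recovering exactly \eqref{(tsig2)R}).

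Finally I would pass from the four generators to all of $\tilde{G}$. Every scalar factor produced above is of the admissible form, a product of Pochhammer symbols $(\alpha)_{n}$ with $\alpha\in S_{abc}$ times $(-1)^{n}x^{n'}(1-x)^{n''}$; this class is closed under multiplication and under the substitutions of $a,b,c,x$ induced by elements of $\tilde{G}$, so composing generator symmetries yields a symmetry of the same shape for every $\varphi\in\tilde{G}$. Since $\tilde{G}\cong G$ has order ninety-six, $R$ acquires ninety-six symmetries. I expect the main obstacle to be the Pochhammer bookkeeping in the third step, reconciling shifted symbols such as $(c+2)_{m-2}$ and $(c-a-b-2)_{m-k-l+2}$ coming from $\mu_i(\tau^{-1}\vec{z})$ with the target symbols $(c)_{m}$ and $(c-a-b)_{m-k-l}$ in \eqref{(tsig0)R}--\eqref{(tsig2)R}, and in confirming that the admissible class is genuinely closed under composition so that the extension from generators to all of $\tilde{G}$ is legitimate.
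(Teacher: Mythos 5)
Your proposal is correct and is essentially the paper's own proof: you establish the bridge identity of Proposition~\ref{R=Q'}, transport each $Q$-symmetry through conjugation by $\tau$ --- your displayed formula for $R(\vec{z})$ is exactly the paper's $(\ref{R=(tau.sig.tau^{-1})R})$ with $g = \lambda_{\tau}$ and $\mu_{i}(\tau^{-1}\vec{z}) = (\tau \lambda_{\sigma_{i}})(\vec{z})$ --- and then extend from the generators to all ninety-six elements of $\tilde{G}$ by closure of the admissible factors under composition, which is the content of the paper's Lemma~\ref{tlam=tlam.tlam}. The only cosmetic difference is your derivation of the bridge identity, which eliminates $F(a - 1, b - 1, c - 1\,; x)$ using the $(k, l, m) = (-1, -1, -1)$ case of $(\ref{3kou})$, whereas the paper rewrites $F(a + 2, b + 2, c + 2\,; x)$ directly via the hypergeometric differential equation; these are the same computation up to a parameter shift.
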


\section{\bf{Proof of Theorem~$\ref{Q-sym}$}}
After characterizing structure of $G$, we prove Theorem~$\ref{Q-sym}$. 

Let $\sigma_{4} := \sigma_{1} \sigma_{3} \sigma_{1} \sigma_{3}$ and $\sigma_{5} := \sigma_{2} \sigma_{4} \sigma_{2} \sigma_{4} \sigma_{3}$ to make them become 
\begin{align*}
\sigma_{4} : \: &\c{k}{l}{m}{a}{b}{c}{x} \mapsto \c{m - k}{m - l}{m}{c - a}{c - b}{c}{x}, \\
\sigma_{5} : \: &\c{k}{l}{m}{a}{b}{c}{x} \mapsto \c{-k}{-l}{-m}{1 - a}{1 - b}{2 - c}{x}. 
\end{align*}
Then, we obtain the following lemma.  
\begin{lem}\label{lem:G}
The structure of $G$ is identified as 
\begin{align*}
G = \langle \sigma_{0} \rangle \times 
\left(\langle \sigma_{1}, \sigma_{2} \rangle \ltimes \left(\langle \sigma_{3} \rangle \times \langle \sigma_{4} \rangle \times \langle \sigma_{5} \rangle \right) \right) 
\cong \mathbb{Z} / 2 \mathbb{Z} \times \left(S_{3} \ltimes \left(\mathbb{Z} / 2 \mathbb{Z} \right)^{3} \right), 
\end{align*}
where $S_{3}$ is the symmetric group of degree $3$; 
thus, the order of $G$ equals $2 \cdot 3! \cdot 2^{3} = 96$. 
\end{lem}
\begin{proof}
First, $G = \langle \sigma_{0} \rangle \times \langle \sigma_{1}, \sigma_{2}, \sigma_{3} \rangle$ holds. 
Next, 
since $\langle \sigma_{3}, \sigma_{4}, \sigma_{5} \rangle$ is normal 
in $\langle \sigma_{1}, \sigma_{2}, \sigma_{3} \rangle$ and satisfies 
$\langle \sigma_{1}, \sigma_{2} \rangle \cap \langle \sigma_{3}, \sigma_{4}, \sigma_{5} \rangle = \left\{{\rm Id}_{G} \right\}$, where ${\rm Id}_{G}$ denotes the identity element of $G$, 
it holds that $\langle \sigma_{1}, \sigma_{2}, \sigma_{3} \rangle \cong \langle \sigma_{1}, \sigma_{2} \rangle \ltimes \langle \sigma_{3}, \sigma_{4}, \sigma_{5} \rangle$. 
Finally, from 
$\sigma_{i}^{2} = {\rm Id}_{G}$ ($0 \leq i \leq 5$), 
$\sigma_{1} \sigma_{2} = \sigma_{2} \sigma_{1} \sigma_{2} \sigma_{1}$ and 
$\sigma_{i} \sigma_{j} = \sigma_{j} \sigma_{i}$ ($3 \leq i, j \leq 5$), 
the proof of the lemma is complete. 
\end{proof}

We prove Theorem~$\ref{Q-sym}$. 
From the uniqueness of analytic continuation, 
it is sufficient to show that $(\ref{(sig0)Q})\mbox{--}(\ref{(sig3)Q})$ hold for $\lvert x \rvert < 1/2$; thus, below, we assume $\lvert x \rvert < 1/2$. 
Also, we assume that 
\begin{align*}
a,\; b,\; c - a,\; c - b,\; c,\; c - a - b,\; a - b \notin \mathbb{Z}. 
\end{align*}

First, we prove $(\ref{(sig0)Q})$ and $(\ref{(sig2)Q})$. 
For the purpose, we introduce two expressions for $Q$ given 
in \cite{Eb1}. 
Let $f_{i}$ $(i = 1, 2, 5, 6)$ be the functions defined by 
\begin{align*}
\y{1}{a}{b}{c}{x} &:= \f{a}{b}{c}{x}, \allowdisplaybreaks \\
\y{2}{a}{b}{c}{x} &:= \f{a}{b}{a + b + 1 - c}{1 - x}, \allowdisplaybreaks \\
\y{5}{a}{b}{c}{x} &:= x^{1 - c} \f{a + 1 - c}{b + 1 - c}{2 - c}{x}, \allowdisplaybreaks \\
\y{6}{a}{b}{c}{x} &:= (1 - x)^{c - a - b} \f{c - a}{c - b}{c + 1 - a - b}{1 - x}, 
\end{align*}
where $\displaystyle \f{a}{b}{c}{x} := \frac{\G(a) \G(b)}{\G(c)} \F{a}{b}{c}{x}$. 
Then, $Q$ can be expressed as 
\begin{align*}
Q = \frac{a b (c)_{m}}{c (a)_{k} (b)_{l}} q, 
\end{align*}
where 
\begin{align}
q &:= \q{k}{l}{m}{a}{b}{c}{x} \nonumber \\
&= \frac{\displaystyle \y{5}{a}{b}{c}{x}\, \y{1}{a + k}{b + l}{c + m}{x} - \y{1}{a}{b}{c}{x}\, \y{5}{a + k}{b + l}{c + m}{x}}{\displaystyle \y{5}{a}{b}{c}{x}\, \y{1}{a + 1}{b + 1}{c + 1}{x} - \y{1}{a}{b}{c}{x}\, \y{5}{a + 1}{b + 1}{c + 1}{x}}, \label{q1} 
\allowdisplaybreaks \\
&= \frac{(- 1)^{m + 1 - k - l}}{(c - a)_{m - k} (c - b)_{m - l}}
\frac{\displaystyle \y{6}{a}{b}{c}{x}\, \y{2}{a + k}{b + l}{c + m}{x} - \y{2}{a}{b}{c}{x}\, \y{6}{a + k}{b + l}{c + m}{x}}{\displaystyle \y{6}{a}{b}{c}{x}\, \y{2}{a + 1}{b + 1}{c + 1}{x} - \y{2}{a}{b}{c}{x}\, \y{6}{a + 1}{b + 1}{c + 1}{x}}. \label{q2}
\end{align}
These expressions $(\ref{q1})$ and $(\ref{q2})$ follow immediately 
from \cite[p.260, $(3. 5)$]{Eb1} and \cite[p.264, the expression above Theorem~$3. 8$]{Eb1}, respectively. 

Using $(\ref{q1})$, we prove $(\ref{(sig0)Q})$. 
Applying $\sigma_{0}$ to $(\ref{q1})$, we have 
\begin{align*}
\left(\sigma_{0} q \right) \c{k}{l}{m}{a}{b}{c}{x} 
&= - \frac{W (a, b, c\,; x)}{W (a + k, b + l, c + m\,; x)} \q{k}{l}{m}{a}{b}{c}{x}, 
\end{align*}
where $W (a, b, c \,; x)$ denotes the denominator of $(\ref{q1})$; namely, 
\begin{align*}
W (a, b, c\,; x) := \y{5}{a}{b}{c}{x}\, \y{1}{a + 1}{b + 1}{c + 1}{x} - \y{1}{a}{b}{c}{x}\, \y{5}{a + 1}{b + 1}{c + 1}{x}. 
\end{align*}
From the formula \cite[p.$262$, Lemma~$3. 6$]{Eb1}
\begin{align*}
W (a, b, c\,; x) = - \frac{\G (a) \G (b) \G (a + 1 - c) \G (b + 1 - c)}{\G (c) \G (1 - c)} x^{-c} (1 - x)^{c - a - b - 1}, 
\end{align*}
we obtain 
\begin{align*}
\frac{W (a, b, c\,; x)}{W (a + k, b + l, c + m\,; x)} 
= \frac{(-1)^{k + l - m} (c - a)_{m - k} (c - b)_{m - l}}{(a)_{k} (b)_{l}} x^{m} (1 - x)^{k + l - m}. 
\end{align*}
Therefore, it follows that 
\begin{align}
\left(\sigma_{0} q \right) \c{k}{l}{m}{a}{b}{c}{x} 
&= \frac{(-1)^{k + l - m - 1} (c - a)_{m - k} (c - b)_{m - l}}{(a)_{k} (b)_{l}} 
x^{m} (1 - x)^{k + l - m} \q{k}{l}{m}{a}{b}{c}{x}. \label{(sig0)q}
\end{align}
Multiplying both sides of $(\ref{(sig0)q})$ by $(a + k) (b + l) (c + m)_{-m} / \left\{(c + m) (a + k)_{-k} (b + l)_{-l} \right\}$ completes the proof of $(\ref{(sig0)Q})$. 

Using $(\ref{q1})$ and $(\ref{q2})$, we prove $(\ref{(sig2)Q})$. 
When we apply $\sigma_{2}$ to $(\ref{q1})$, the numerator becomes
\begin{align}
&\y{5}{a}{b}{a + b + 1 - c}{1 - x}\, 
\y{1}{a + k}{b + l}{a + b + 1 - c + k + l - m}{1 - x} \nonumber \\
&\quad - \y{1}{a}{b}{a + b + 1 - c}{1 - x}\, 
\y{5}{a + k}{b + l}{a + b + 1 - c + k + l - m}{1 - x}, \label{(sig2)q-nume}
\end{align}
and the denominator becomes
\begin{align}
&\y{5}{a}{b}{a + b + 1 - c}{1 - x}\, 
\y{1}{a + 1}{b + 1}{a + b + 2 - c}{1 - x} \nonumber \\
&\quad - \y{1}{a}{b}{a + b + 1 - c}{1 - x}\, 
\y{5}{a + 1}{b + 1}{a + b + 2 - c}{1 - x}. \label{(sig2)q-deno}
\end{align}
From the definitions of $f_{i}$ $(i = 1, 2, 5, 6)$, we can rewrite 
$(\ref{(sig2)q-nume})$ and $(\ref{(sig2)q-deno})$ as 
\begin{align}
&\y{6}{a}{b}{c}{x}\, \y{2}{a + k}{b + l}{c + m}{x} 
- \y{2}{a}{b}{c}{x}\, \y{6}{a + k}{b + l}{c + m}{x}, \label{(sig2)q-nume2} \allowdisplaybreaks \\
&\y{6}{a}{b}{c}{x}\, \y{2}{a + 1}{b + 1}{c + 1}{x} 
- \y{2}{a}{b}{c}{x}\, \y{6}{a + 1}{b + 1}{c + 1}{x}, \label{(sig2)q-deno2}
\end{align}
respectively. Comparing $(\ref{(sig2)q-nume2}) / (\ref{(sig2)q-deno2})$ with $(\ref{q2})$, we obtain 
\begin{align}\label{(sig2)q}
\left(\sigma_{2} q \right) \c{k}{l}{m}{a}{b}{c}{x} 
= (-1)^{k + l - m - 1} (c - a)_{m - k} (c - b)_{m - l} \q{k}{l}{m}{a}{b}{c}{x}. 
\end{align}
Multiplying both sides of $(\ref{(sig2)q})$ by 
$a b (a + b + 1 - c)_{k + l - m} / \left\{(a + b + 1 - c) (a)_{k} (b)_{l} \right\}$ completes the proof of $(\ref{(sig2)Q})$. 

Next, we prove $(\ref{(sig1)Q})$ by using the following two formulas: 
\begin{align}
\F{a}{b}{c}{x} &= (1 - x)^{- b} \F{c - a}{b}{c}{\frac{x}{x - 1}}, \label{F-3} \allowdisplaybreaks \\
\F{c - a}{b + 1}{c + 1}{\frac{x}{x - 1}} 
&= \frac{a - c}{a (1 - x)} \F{c - a + 1}{b + 1}{c + 1}{\frac{x}{x - 1}} 
+ \frac{c}{a} \F{c - a}{b}{c}{\frac{x}{x - 1}}, \label{(0,1,1)}
\end{align}
where $(\ref{F-3})$ is the Pfaff's identity, 
and $(\ref{(0,1,1)})$ is an immediate consequence of \cite[p.$14,\ (2.5)$]{Eb2}. 
Applying $(\ref{F-3})$ to both sides of $(\ref{3kou})$, we have 
\begin{align*}
\F{c - a + m - k}{b + l}{c + m}{\frac{x}{x - 1}} 
&= \Q{k}{l}{m}{a}{b}{c}{x} (1 - x)^{l - 1} \F{c - a}{b + 1}{c + 1}{\frac{x}{x - 1}} \\ 
&\quad+ \R{k}{l}{m}{a}{b}{c}{x} (1 - x)^{l} \F{c - a}{b}{c}{\frac{x}{x - 1}}. 
\end{align*}
Moreover, from $(\ref{(0,1,1)})$, we have 
\begin{align}
&\F{c - a + m - k}{b + l}{c + m}{\frac{x}{x - 1}}\nonumber \\
&= \frac{a - c}{a} (1 - x)^{l - 2} \Q{k}{l}{m}{a}{b}{c}{x} \F{c - a + 1}{b + 1}{c + 1}{\frac{x}{x - 1}} \nonumber \\
&\quad+ \left\{\frac{c}{a} (1 - x)^{l - 1} \Q{k}{l}{m}{a}{b}{c}{x} + (1 - x)^{l} \R{k}{l}{m}{a}{b}{c}{x} \right\} 
\F{c - a}{b}{c}{\frac{x}{x - 1}}. \label{F(c-a)-2}
\end{align}
On the other hand, replacing $(k, l, m, a, b, c, x)$ by $(m - k, l, m, c - a, b, c, x/(x - 1))$ in $(\ref{3kou})$, we have 
\begin{align}
\F{c - a + m - k}{b + l}{c + m}{\frac{x}{x - 1}} 
&= \Q{m - k}{l}{m}{c - a}{b}{c}{\frac{x}{x - 1}} \F{c - a + 1}{b + 1}{c + 1}{\frac{x}{x - 1}} \nonumber \\
&\quad + \R{m - k}{l}{m}{c - a}{b}{c}{\frac{x}{x - 1}} \F{c - a}{b}{c}{\frac{x}{x - 1}}. \label{Q''R''}
\end{align}
Equating the coefficients of $F (c - a + 1, b + 1, c + 1\,; x/(x - 1))$ 
in $(\ref{F(c-a)-2})$ and $(\ref{Q''R''})$ completes the proof of $(\ref{(sig1)Q})$. 

Finally, we can immediately obtain $(\ref{(sig3)Q})$ from the fact that 
$F (\alpha, \beta, \gamma\,; x)$ is symmetric with respect to the exchange of $\alpha$ and $\beta$.

\section{\bf{Proof of Lemma~\ref{R=Q'}}}
In this section, we prove Lemma~$\ref{R=Q'}$. 

Replacing $(k, l, m, a, b, c)$ by $(k - 1, l - 1, m - 1, a + 1, b + 1, c + 1)$ in $(\ref{3kou})$, we have 
\begin{align}\label{Q'R'}
\F{a + k}{b + l}{c + m}{x} = Q' \cdot \F{a + 2}{b + 2}{c + 2}{x} + R' \cdot \F{a + 1}{b + 1}{c + 1}{x}, 
\end{align}
where
\begin{align*}
Q' := \Q{k - 1}{l - 1}{m - 1}{a + 1}{b + 1}{c + 1}{x}, \quad 
R' := \R{k - 1}{l - 1}{m - 1}{a + 1}{b + 1}{c + 1}{x}. 
\end{align*}
As is well known, $F (a, b, c\,; x)$ satisfies 
\begin{align*}
\partial\, \F{a}{b}{c}{x} = \frac{a b}{c} \F{a + 1}{b + 1}{c + 1}{x}, 
\end{align*}
where $\partial := d/dx$, and is a solution of 
the hypergeometric differential equation $L_{a b c}\, y = 0$, where 
\begin{align*}
L_{a b c} := \partial^{2} + \frac{c - (a + b + 1) x}{x (1 - x)} \partial - \frac{a b}{x (1 - x)}. 
\end{align*}
Using these facts, we have 
\begin{align*}
0 =& L_{a b c}\, \F{a}{b}{c}{x} \allowdisplaybreaks \\
=& \partial^{2}\, \F{a}{b}{c}{x} + \frac{c - (a + b + 1) x}{x (1 - x)} \partial\, \F{a}{b}{c}{x} 
- \frac{a b}{x (1 - x)} \F{a}{b}{c}{x} \allowdisplaybreaks \\
=& \frac{a b (a + 1) (b + 1)}{c (c + 1)} \F{a + 2}{b + 2}{c + 2}{x} \\
&+ \frac{a b \left\{c - (a + b + 1) x \right\}}{ c x (1 - x)} \F{a + 1}{b + 1}{c + 1}{x} - \frac{a b}{x (1 - x)} \F{a}{b}{c}{x}. 
\end{align*}
Therefore, we obtain 
\begin{align*}
\F{a + 2}{b + 2}{c + 2}{x} 
&= - \frac{(c + 1) \left\{c - (a + b + 1) x \right\}}{(a + 1) (b + 1) x (1 - x)} \F{a + 1}{b + 1}{c + 1}{x} \\
&\quad + \frac{c (c + 1)}{(a + 1) (b + 1) x (1 - x)} \F{a}{b}{c}{x}. 
\end{align*}
Using this, we rewrite $(\ref{Q'R'})$ as 
\begin{align}
\F{a + k}{b + l}{c + m}{x} 
&= \left\{- \frac{(c + 1) \left\{c - (a + b + 1) x \right\}}{(a + 1) (b + 1) x (1 - x)} Q' + R' \right\} 
\F{a + 1}{b + 1}{c + 1}{x} \nonumber \\
&\quad + \frac{c (c + 1)}{(a + 1) (b + 1) x (1 - x)} Q' \cdot \F{a}{b}{c}{x}. \label{Q'R'-2} 
\end{align}
Equating the coefficients of $F (a, b, c\,; x)$ in $(\ref{3kou})$ and $(\ref{Q'R'-2})$ completes the proof of Lemma~$\ref{R=Q'}$.

\section{\bf{Proof of Theorem~$\ref{R-sym}$}}
Using Theorem~$\ref{Q-sym}$ and Lemma~$\ref{R=Q'}$, we prove Theorem~$\ref{R-sym}$. 

For any $(k, l, m) \in \mathbb{Z}^{3}$, 
let $\lambda_{\tau}$ be the rational function of $a$, $b$, $c$ and $x$ defined by
\begin{align*}
\lambda_{\tau} (\vec{z}) := \frac{c (c + 1)}{(a + 1) (b + 1) x (1 - x)}, 
\end{align*}
where $\vec{z} := (k, l, m\,; a, b, c\,; x)$. 
Then, the relation in Lemma~$\ref{R=Q'}$ can be written as
\begin{align*}
R (\vec{z}) = \lambda_{\tau} (\vec{z}) \, (\tau Q) (\vec{z}). 
\end{align*}
Also, for any $\sigma \in G$ and $(k, l, m) \in \mathbb{Z}^{3}$, 
let $\lambda_{\sigma}$ be the rational function of $a$, $b$, $c$ and $x$ satisfying
\begin{align*}
Q (\vec{z}) = \lambda_{\sigma} (\vec{z}) \, (\sigma Q) (\vec{z}). 
\end{align*}
Then, we obtain 
\begin{align}
R (\vec{z}) 
&= \lambda_{\tau} (\vec{z}) \, (\tau Q) (\vec{z}) \nonumber \\
&= \lambda_{\tau} (\vec{z}) \, (\tau \lambda_{\sigma}) (\vec{z}) \, (\tau \sigma Q) (\vec{z})  \nonumber \\
&= \lambda_{\tau} (\vec{z}) \, (\tau \lambda_{\sigma}) (\vec{z}) \, 
\left((\tau \sigma \tau^{-1}) \tau Q \right)\!(\vec{z}) \nonumber \\
&= \frac{\lambda_{\tau} (\vec{z}) \, (\tau \lambda_{\sigma}) (\vec{z})}{\left(\left(\tau \sigma \tau^{-1} \right) \lambda_{\tau} \right) (\vec{z})} 
\left((\tau \sigma \tau^{-1} ) R \right)\!(\vec{z}). \label{R=(tau.sig.tau^{-1})R}
\end{align}
This implies that $R$ has a symmetry under $\tau \sigma \tau^{-1}$ for each $\sigma \in G$; 
namely, $R$ has symmetries under the action of $\tilde{G}$. 
In particular, letting $\sigma = \sigma_{i}$ ($i = 0, 1, 2, 3$) in $(\ref{R=(tau.sig.tau^{-1})R})$, 
we can derive $(\ref{(tsig0)R})$--$(\ref{(tsig3)R})$ 
from $(\ref{(sig0)Q})$--$(\ref{(sig3)Q})$, respectively.

\medskip
\thanks{\bf{Acknowledgements}}
We are deeply grateful to Prof. Hiroyuki Ochiai for helpful comments. 
Also, we would like to thank Akihito Ebisu for his comments and suggestions.

\medskip
\begin{flushleft}
Yuka Yamaguchi\\
Faculty of Education\\
University of Miyazaki\\
1-1 Gakuen Kibanadai-nishi\\
Miyazaki 889-2192\\
JAPAN\\
y-yamaguchi@cc.miyazaki-u.ac.jp
\end{flushleft}

\end{document}